\newtheorem{thm}{Theorem}[section]
\theoremstyle{definition}
\newtheorem{cor}[thm]{Corollary}
\newtheorem{lem}[thm]{Lemma}
\theoremstyle{remark}
\newtheorem*{remark}{Remark}
\numberwithin{equation}{section}
\def \<{\langle}
\def \>{\rangle}
\def \((  {(\!(}
\def \)) {)\!)}
\begin{document}

\title[expansions of the real field by a closed discrete set]{Defining the set of integers in expansions of the real field by a closed discrete set}%
\author{Philipp Hieronymi}
\address{Department of Mathematics \& Statistics\\
McMaster University\\
1280 Main Street West\\
Hamilton, Ontario L8S 4K1\\
Canada }
\email{P@hieronymi.de}
\thanks{}%
\subjclass[2000]{Primary 03C64; Secondary 14P10}
\keywords{}%

\date{\today}
%\dedicatory{}%
%\commby{}%
\maketitle

\begin{abstract} Let $D\subseteq \mathbb{R}$ be closed and discrete and $f:D^n \to \mathbb{R}$ be such that $f(D^n)$ is somewhere dense. We show that $(\mathbb{R},+,\cdot,f)$ defines $\mathbb{Z}$. As an application, we get that for every $\alpha,\beta \in \mathbb{R}_{>0}$ with $\log_{\alpha}(\beta)\notin \mathbb{Q}$, the real field expanded by the two cyclic multiplicative subgroups generated by $\alpha$ and $\beta$ defines $\mathbb{Z}$.
\end{abstract}

\begin{section}{Introduction}

Let $\overline{\mathbb{R}}=(\mathbb{R},+,\cdot)$ be the field
of real numbers. The main technical result of this paper is the following theorem.

\begin{thm}\label{mainthm} Let $D\subseteq \mathbb{R}$ be closed and discrete and $f:D^n \to \mathbb{R}$ be such that $f(D^n)$ is somewhere dense. Then $(\overline{\mathbb{R}},f)$ defines $\mathbb{Z}$.
\end{thm}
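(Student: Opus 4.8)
The plan is to reduce the statement to Miller's topological dichotomy for expansions of the real field, which asserts that any expansion of $\overline{\mathbb{R}}$ either defines $\mathbb{Z}$ or has the tameness property that every definable subset of $\mathbb{R}$ has nonempty interior or else is nowhere dense. Writing $\mathcal{R}=(\overline{\mathbb{R}},f)$, the goal then becomes to exhibit a single definable subset of $\mathbb{R}$ that is somewhere dense yet has empty interior, since the presence of such a set rules out the tame alternative and forces $\mathbb{Z}$ to be definable.

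The natural candidate is $f(D^n)$ itself. First I would check that it is definable in $\mathcal{R}$: since the graph $\Gamma_f \subseteq \mathbb{R}^{n+1}$ is part of the structure, $f(D^n)$ is the projection $\{\,y\in\mathbb{R} : \exists\,\bar x\ (\bar x,y)\in\Gamma_f\,\}$ and hence definable. Next I would record the two relevant topological facts. On one hand it is somewhere dense by hypothesis. On the other hand it has empty interior: a discrete subset of $\mathbb{R}$ is countable, so $D$, and therefore $D^n$, is countable; being the image of a countable set under a function, $f(D^n)$ is countable, and a countable subset of $\mathbb{R}$ cannot contain a nonempty open interval. Thus $f(D^n)$ is a definable, somewhere dense set of empty interior, and the hypotheses of the theorem are exactly what is needed to produce this configuration.

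With this in hand the theorem follows immediately: $\mathcal{R}$ defines a somewhere dense set with empty interior, so the tame side of the dichotomy fails and $\mathcal{R}$ must define $\mathbb{Z}$. The reduction itself is essentially bookkeeping of definability together with elementary point-set topology, so the substantive content is located in the dichotomy rather than in the passage to it.

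The hard part will therefore be whatever one is unwilling to cite as a black box. If Miller's result is assumed, there is little left to do; if one instead wishes to be self-contained, the real obstacle is proving the dichotomy in the form needed here, namely passing from the qualitative wildness witnessed by a definable somewhere dense set of empty interior to an explicit first-order definition of the honest integers, and in particular anchoring the correct additive and multiplicative scale so that the set one carves out is genuinely $\mathbb{Z}$ rather than a rescaled or nonstandard copy. I would expect the applications, such as the two cyclic multiplicative subgroups $\alpha^{\mathbb{Z}}$ and $\beta^{\mathbb{Z}}$ with $\log_\alpha\beta\notin\mathbb{Q}$, to reduce to verifying the hypotheses of this theorem, where the somewhere-density of the relevant image (for instance of $(u,v)\mapsto uv$ on the closed discrete set $\alpha^{\mathbb{Z}}\cup\beta^{\mathbb{Z}}$) follows from Kronecker's density theorem.
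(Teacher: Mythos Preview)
Your argument is circular. The dichotomy you invoke---that every expansion of $\overline{\mathbb{R}}$ either defines $\mathbb{Z}$ or has the property that every definable subset of $\mathbb{R}$ has interior or is nowhere dense---is not a theorem in the literature; no such general result is known. The closest available statement is Theorem~\ref{dichotomy} of this very paper, which establishes the dichotomy only for structures of the form $(\mathcal{R},D)$ with $\mathcal{R}$ o-minimal over $\overline{\mathbb{R}}$ and $D$ closed and discrete, and that theorem is obtained by \emph{combining} Theorem~\ref{mainthm} (the statement you are trying to prove) with Friedman--Miller's result on sparse sets. So the dichotomy is downstream of the theorem, not upstream of it, and citing it to prove Theorem~\ref{mainthm} begs the question. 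Your closing paragraph half-acknowledges this, but you have misidentified what can be taken as a black box: the honest black box available here is Miller's AEG (Lemma~\ref{millerlemma}), not a global interior-or-nowhere-dense dichotomy.

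The paper's actual proof is constructive and quite different. After reducing via Lemma~\ref{onedim} to $n=1$ with $D\subseteq\mathbb{R}_{\geq 1}$ having gaps at least $1$ and $f(D)$ dense in $(1,2)$, one builds by induction a sequence $(d_N)$ in $D$ so that the values $f(d_N)$ converge to some $c$ at a precisely controlled rate, while a measure argument (Steinhaus) guarantees enough room to keep the intervals $[f(d),f(d)(1+d^{-2})]$ for competing $d$ away from $c$. This separation is exactly what makes the set $\{d_N:N>1\}$ first-order definable in $(\overline{\mathbb{R}},f)$. The definable function $\nu(x)=x^{-2}f(x)/(c-f(x))$ then satisfies $\nu(d_N)\in(N,N+\tfrac1N)$, so $\lim_N(\nu(d_{N+1})-\nu(d_N))=1$ and Miller's AEG yields $\mathbb{Z}$. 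The substantive work---engineering a definable sequence whose image is asymptotically an arithmetic progression---is precisely the step your proposal offloads onto a dichotomy that is not available.
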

\noindent By combining Theorem \ref{mainthm} with a result of Friedman and Miller \cite{sparse}, Theorem A, we obtain the following striking dichotomy.

\begin{thm}\label{dichotomy} Let $\mathcal{R}$ be an o-minimal expansion of $\overline{\mathbb{R}}$ and let $D \subseteq \mathbb{R}$ be closed and discrete. Then either
\begin{itemize}
 \item  $(\mathcal{R},D)$ defines $\mathbb{Z}$ or
 \item  every  subset of $\mathbb{R}$ definable in $(\mathcal{R},D)$ has interior or is nowhere dense.
\end{itemize}
\end{thm}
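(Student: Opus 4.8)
The plan is to deduce the statement by combining Theorem~A of \cite{sparse} with Theorem~\ref{mainthm}, the two alternatives matching up so cleanly that essentially the only work beyond those inputs is a definability/reduct bookkeeping argument. First I would apply Friedman and Miller's Theorem~A to the o-minimal structure $\mathcal{R}$ and the closed discrete set $D$. This yields a dichotomy for $(\mathcal{R},D)$: either every subset of $\mathbb{R}$ definable in $(\mathcal{R},D)$ has interior or is nowhere dense---which is exactly the second bullet, so there is nothing left to prove---or $(\mathcal{R},D)$ defines a somewhere dense subset of $\mathbb{R}$ with empty interior, from which one extracts an integer $n\geq 1$ and a function $f\colon D^n\to\mathbb{R}$, definable in $(\mathcal{R},D)$, whose image $f(D^n)$ is somewhere dense.

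So suppose the non-tame alternative holds and fix such an $f$. Since $D$ is closed and discrete in $\mathbb{R}$, the power $D^n$ is closed and discrete in $\mathbb{R}^n$, and hence the pair $(D^n,f)$ meets the hypotheses of Theorem~\ref{mainthm} verbatim. (If the extraction step only produces $f$ on a definable subset of $D^n$, I would extend it by a constant value to all of $D^n$; this leaves the image somewhere dense and leaves $f$ definable in $(\mathcal{R},D)$.) Applying Theorem~\ref{mainthm} to this $f$ then shows that the structure $(\overline{\mathbb{R}},f)$ defines $\mathbb{Z}$.

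The last step is to transfer this back to the ambient structure. Because $\overline{\mathbb{R}}=(\mathbb{R},+,\cdot)$ is a reduct of $\mathcal{R}$ and $f$ is definable in $(\mathcal{R},D)$, every relation of $(\overline{\mathbb{R}},f)$ is definable in $(\mathcal{R},D)$; consequently every set definable in $(\overline{\mathbb{R}},f)$ is definable in $(\mathcal{R},D)$. In particular $\mathbb{Z}$ is definable in $(\mathcal{R},D)$, which is the first bullet, completing the proof. I would also record that the two alternatives are mutually exclusive, so that the dichotomy is sharp: whenever $(\mathcal{R},D)$ defines $\mathbb{Z}$ it also defines $\mathbb{Q}=\{a/b : a,b\in\mathbb{Z},\ b\neq 0\}$, and $\mathbb{Q}$ is dense in $\mathbb{R}$ with empty interior, so the second alternative necessarily fails in that case.

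I expect the only genuine obstacle to be one of matching hypotheses rather than of mathematical difficulty: one must confirm that the non-tame side of Theorem~A of \cite{sparse} really delivers (or allows one to build) a function of exactly the shape Theorem~\ref{mainthm} demands---total on an honest power $D^n$ of a closed discrete set and with somewhere dense image---as opposed to merely a somewhere dense definable set with no function attached. Once the function is produced, the closedness/discreteness of $D^n$ and the reduct argument are entirely routine, since the heavy lifting has already been carried out in Theorem~\ref{mainthm} and in \cite{sparse}.
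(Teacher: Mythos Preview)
Your proposal is correct and is exactly the argument the paper intends: the paper does not spell out a proof of Theorem~\ref{dichotomy} at all, but simply states that it follows by combining Theorem~\ref{mainthm} with Theorem~A of \cite{sparse}, and your write-up is precisely the unpacking of that combination (including the reduct bookkeeping). One small sharpening: the function $f$ supplied by Theorem~A of \cite{sparse} is in fact definable in the o-minimal structure $\mathcal{R}$ itself (as a total function $\mathbb{R}^m\to\mathbb{R}$) rather than merely in $(\mathcal{R},D)$, so restricting it to $D^m$ already yields a total function on $D^m$ and your contingency about extending by a constant is unnecessary.
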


\noindent An expansion of the real field that defines the set of integers also defines every projective subset of $\mathbb{R}$. Such a structure is as wild from a model theoretic view point as it can be. In contrast to this, every expansion of the real field whose definable sets either have interior or are nowhere dense, can be considered to be well behaved. For details, see \cite{tame}.\\

\noindent The proof of Theorem \ref{mainthm} will be given in Section 2 of this paper. In the rest of this section, several corollaries of Theorem \ref{mainthm} will be presented.

\subsection*{Two discrete multiplicative subgroups}
For any $\alpha \in \mathbb{C}^{\times}$, let
\begin{equation*}
\alpha^{\mathbb{Z}}:= \{\alpha^k : k \in \mathbb{Z}\}.
\end{equation*}
 In \cite{discrete} van den Dries established that the
structure $(\overline{\mathbb{R}},\alpha^{\mathbb{Z}})$ is model theoretically tame, when $\alpha \in \mathbb{R}^{\times}$. In his paper, he axiomatized its theory, showed that it has quantifier elimination and is decidable, if $\alpha$ is recursive. In the end, he asked whether similar results can be obtained for the structure $(\overline{\mathbb{R}},\alpha^{\mathbb{Z}},\beta^{\mathbb{Z}})$, in particular whether this structure defines $\mathbb{Z}$. This question has remained open ever since and has been reraised in literature many times (see \cite{dense}, \cite{proj}, \cite{tame}, \cite{core1}, \cite{trajector}, \cite{michael}). Using Theorem \ref{mainthm}, we answer this question.

\begin{thm}\label{twogroups} Let $\alpha,\beta \in \mathbb{R}_{>0}$ with $\log_{\alpha}(\beta)\notin \mathbb{Q}$. Then $(\overline{\mathbb{R}},\alpha^{\mathbb{Z}},\beta^{\mathbb{Z}})$ defines $\mathbb{Z}$.
\end{thm}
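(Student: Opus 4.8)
The plan is to derive Theorem \ref{twogroups} from the main theorem by setting up a suitable closed discrete set $D$ and a function $f$ on it whose image is somewhere dense. The natural candidate is $D = \alpha^{\mathbb{Z}}$ and to produce a function whose image, expressed using $\beta^{\mathbb{Z}}$, encodes the irrationality of $\log_\alpha(\beta)$. Concretely, I would first note that $\alpha^{\mathbb{Z}}$ is closed and discrete, and that the structure $(\overline{\mathbb{R}},\alpha^{\mathbb{Z}},\beta^{\mathbb{Z}})$ can define, from any element $\alpha^k$ of $D$, the quantities needed to compare it against powers of $\beta$.

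The key step is to manufacture a definable function $f$ on $D^n$ (for suitable $n$) whose image is somewhere dense, so that Theorem \ref{mainthm} applies. The heuristic is that the set $\{\,k\log_\alpha(\beta) \bmod 1 : k \in \mathbb{Z}\,\}$ is dense in $[0,1)$ precisely because $\log_\alpha(\beta)$ is irrational (equidistribution/Weyl). Translating this multiplicatively, the ratios $\alpha^m / \beta^k = \alpha^{m - k\log_\alpha(\beta)}$ approximate arbitrarily closely any prescribed value as $m,k$ range over $\mathbb{Z}$, so a definable map sending pairs $(\alpha^m, \beta^k)$ to, say, $\alpha^m/\beta^k$ restricted to the window where this ratio lies in a fixed bounded interval will have somewhere dense image. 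I would set $f$ to be such a ratio map (or a closely related bounded definable expression) on the appropriate product of copies of the two groups, and then invoke Theorem \ref{mainthm} to conclude that $\mathbb{Z}$ is definable.

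The main obstacle will be to make the domain genuinely of the form $D^n$ for a single closed discrete $D$, and to arrange $f$ to be a function in the strict sense of the theorem while guaranteeing density of its image. Since the hypothesis of Theorem \ref{mainthm} requires a single closed discrete set $D$, I would take $D = \alpha^{\mathbb{Z}} \cup \beta^{\mathbb{Z}}$, which remains closed and discrete (as the union of two closed discrete sets whose only accumulation behavior is at $0$ and $\infty$, it has no accumulation point in $\mathbb{R}$ since neither group does), and then define $f : D^2 \to \mathbb{R}$ by a case distinction that, on the pairs $(\alpha^m,\beta^k)$, returns the truncated ratio. The careful part is verifying that $f$ is well-defined and definable in $(\overline{\mathbb{R}},\alpha^{\mathbb{Z}},\beta^{\mathbb{Z}})$ and that its image accumulates densely on some interval; this is exactly where the irrationality hypothesis $\log_\alpha(\beta)\notin\mathbb{Q}$ enters, via a density argument for the additive subgroup $\mathbb{Z} + \mathbb{Z}\log_\alpha(\beta)$ of $\mathbb{R}$.

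Once somewhere density of $f(D^2)$ is established, Theorem \ref{mainthm} immediately yields that $(\overline{\mathbb{R}},f)$, and hence $(\overline{\mathbb{R}},\alpha^{\mathbb{Z}},\beta^{\mathbb{Z}})$ in which $f$ is definable, defines $\mathbb{Z}$, completing the argument.
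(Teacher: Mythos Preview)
Your approach is the paper's approach: take $D$ to be (essentially) the union of the two cyclic groups and let $f:D^2\to\mathbb{R}$ be the quotient map $(x,y)\mapsto x/y$; density of the image follows from the irrationality of $\log_\alpha(\beta)$, and then Theorem~\ref{mainthm} finishes. There is no need for a ``truncated ratio'' or a case distinction: the raw quotient map already has image dense in $\mathbb{R}_{>0}$, which is all Theorem~\ref{mainthm} requires.

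There is, however, a genuine slip in your choice of $D$. The set $\alpha^{\mathbb{Z}}\cup\beta^{\mathbb{Z}}$ is \emph{not} closed in $\mathbb{R}$: assuming (without loss of generality) $\alpha,\beta>1$, the negative powers $\alpha^{-k}$ accumulate at $0$, and $0\in\mathbb{R}$ is a bona fide accumulation point not in the set. Your parenthetical justification (``no accumulation point in $\mathbb{R}$ since neither group does'') is simply false. The paper avoids this by taking $D=\alpha^{\mathbb{N}}\cup\beta^{\mathbb{N}}$, which \emph{is} closed and discrete once $\alpha,\beta>1$ (and is definable in $(\overline{\mathbb{R}},\alpha^{\mathbb{Z}},\beta^{\mathbb{Z}})$). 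With that correction your argument goes through, and in fact collapses to the paper's two-line proof.
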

\noindent \begin{proof} The set $\alpha^{\mathbb{N}}\cup \beta^{\mathbb{N}}$ is closed and discrete. Moreover, it is definable in
$(\overline{\mathbb{R}},\alpha^{\mathbb{Z}},\beta^{\mathbb{Z}})$ and its set of quotients is dense in $\mathbb{R}_{>0}$.
\end{proof}

\begin{remark} Many results for related structures are known. Van den Dries and Günayd\i n proved in \cite{dense} that the expansion of the real field by the product group $\alpha^{\mathbb{Z}}\cdot \beta^{\mathbb{Z}}$ does not define the set of integers. Tychonievich showed in \cite{michael} that $(\overline{\mathbb{R}},\alpha^{\mathbb{Z}}\cdot \beta^{\mathbb{Z}})$ expanded by the restriction of the exponential function to the unit interval defines the set of integers.
\end{remark}

\subsection*{Definable subgroups} It is well known that no o-minimal expansion of $\overline{\mathbb{R}}$ defines a non-trivial proper subgroup of either $(\overline{\mathbb{R}},+)$ or $(\mathbb{R}_{>0},\cdot)$. Theorem \ref{dichotomy} and \cite{proj} 1.5 allow us to generalizes this result as follows.

\begin{thm}\label{definablesubgroups} Let $\mathcal{R}$ be an o-minimal expansion of $\overline{\mathbb{R}}$ and $D\subseteq \mathbb{R}$ closed and discrete such that $(\mathcal{R},D)$ does not define $\mathbb{Z}$. Then $(\mathbb{R},+)$ has no non-trivial proper definable subgroups, and there is an $a \in \mathbb{R}$ such that every non-trivial proper definable subgroup of $(\mathbb{R}_{>0},\cdot)$ is of the form $(a^{q})^{\mathbb{Z}}$, for some $q \in \mathbb{Q}$.
\end{thm}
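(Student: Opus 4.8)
The plan is to prove Theorem~\ref{definablesubgroups} by reducing questions about definable subgroups to the dichotomy in Theorem~\ref{dichotomy} and the cited result \cite{proj} 1.5. The key observation is that a non-trivial proper subgroup of either $(\mathbb{R},+)$ or $(\mathbb{R}_{>0},\cdot)$ cannot simultaneously have interior and be all of the group: a subgroup with non-empty interior contains an open interval, hence (by translating via the group operation) is open, hence clopen, hence all of the connected group. So a \emph{proper} definable subgroup $G$ has empty interior, and therefore so does every coset, which means $G$ itself is nowhere dense unless it is somewhere dense with empty interior. Under the hypothesis that $(\mathcal{R},D)$ does not define $\mathbb{Z}$, Theorem~\ref{dichotomy} forces every definable subset of $\mathbb{R}$ — in particular $G$ — to have interior or be nowhere dense. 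Combining these, every non-trivial proper definable subgroup must be nowhere dense and in particular closed-and-discrete-like in character.

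\medskip

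\noindent First I would dispose of the additive case. Suppose $G \leq (\mathbb{R},+)$ is a non-trivial proper definable subgroup. By the argument above $G$ has empty interior, so by Theorem~\ref{dichotomy} it is nowhere dense, hence discrete around $0$; a discrete subgroup of $(\mathbb{R},+)$ is either trivial or of the form $a\mathbb{Z}$ for some $a>0$. But $a\mathbb{Z}$ is definably isomorphic to $\mathbb{Z}$ via scaling by $1/a$ (which is definable in $\overline{\mathbb{R}}$), so defining $a\mathbb{Z}$ would define $\mathbb{Z}$, contradicting the hypothesis. Hence $G$ is trivial, giving the first assertion.

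\medskip

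\noindent Next I would treat the multiplicative case by transferring the structure through $\log$/$\exp$ where possible, but more directly I expect the cited result \cite{proj} 1.5 to classify the possible definable subgroups of $(\mathbb{R}_{>0},\cdot)$ in a structure not defining $\mathbb{Z}$: by the same interior/nowhere-dense dichotomy, any non-trivial proper definable subgroup $H \leq (\mathbb{R}_{>0},\cdot)$ is nowhere dense, hence discrete, hence of the form $b^{\mathbb{Z}}$ for some $b>1$. The content of \cite{proj} 1.5 should be that all such definable cyclic groups share a common ``base'' $a$ up to rational powers — otherwise two multiplicatively independent discrete groups $\alpha^{\mathbb{Z}}$ and $\beta^{\mathbb{Z}}$ would be definable, and their union would be closed, discrete, definable, with dense quotient set, so by Theorem~\ref{mainthm} (exactly as in the proof of Theorem~\ref{twogroups}) the structure would define $\mathbb{Z}$, again a contradiction. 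This pins every definable $H$ to the form $(a^{q})^{\mathbb{Z}}$ for a single $a$ and varying $q \in \mathbb{Q}$.

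\medskip

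\noindent The main obstacle I anticipate is not the additive case but extracting the precise uniform statement about the multiplicative groups from \cite{proj} 1.5 and showing that a single $a$ works for \emph{all} proper definable subgroups at once, rather than an $a$ depending on each subgroup. The delicate point is ruling out two definable discrete multiplicative groups with irrational $\log$-ratio: this is exactly where Theorem~\ref{mainthm} does the heavy lifting, since $\log_{b_1}(b_2)\notin\mathbb{Q}$ would yield $\mathbb{Z}$-definability. Once this incompatibility is established, all definable discrete multiplicative subgroups lie in a common one-parameter family $a^{\mathbb{Q}}$, and writing each as $(a^{q})^{\mathbb{Z}}$ is routine.
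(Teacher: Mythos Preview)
Your proposal is correct and aligns with the paper's approach: the paper itself gives no detailed argument, only the one-line remark preceding the theorem that it follows from Theorem~\ref{dichotomy} and \cite{proj}~1.5. Your write-up supplies exactly the expected details---using the dichotomy to force any proper definable subgroup to be nowhere dense (hence cyclic, by the standard structure theorem for subgroups of $(\mathbb{R},+)$ and $(\mathbb{R}_{>0},\cdot)$), and then invoking Theorem~\ref{mainthm} via the argument of Theorem~\ref{twogroups} (in place of, or as the content of, \cite{proj}~1.5) to rule out two multiplicatively independent definable cyclic subgroups and thereby obtain the common base~$a$.
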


\noindent Note that by \cite{discrete} $(\overline{\mathbb{R}},2^{\mathbb{Z}})$ does not define $\mathbb{Z}$. Hence Theorem \ref{definablesubgroups} is optimal. But more can be said in special cases. For $r\in \mathbb{R}$, $x
^r$ denotes the function
on $\mathbb{R}$ sending $t$ to $t^r$ for $t>0$ and to $0$ for $t \leq 0$.

\begin{cor} Let $D\subseteq \mathbb{R}$ be closed and discrete and let $r$ be an irrational number. If $(\overline{\mathbb{R}},x^r,D)$ does not define $\mathbb{Z}$, then there is no non-trivial proper subgroup of either $(\mathbb{R},+)$ or $(\mathbb{R}_{>0},\cdot)$ definable in $(\overline{\mathbb{R}},x^r,D)$.
\end{cor}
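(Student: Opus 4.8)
The plan is to derive both assertions from Theorem \ref{definablesubgroups}, applied to the o-minimal structure $\mathcal{R} := (\overline{\mathbb{R}}, x^r)$. First I would record that $\mathcal{R}$ is genuinely an o-minimal expansion of $\overline{\mathbb{R}}$: on $(0,\infty)$ we have $x^r = \exp(r\log x)$, so the graph of $x^r$ is definable with the parameter $r$ in the exponential field $\mathbb{R}_{\exp}$, which is o-minimal by Wilkie's theorem; since o-minimality passes to reducts (every subset of the line definable in a reduct is definable in the ambient structure, hence a finite union of points and intervals), $\mathcal{R}$ is o-minimal. Thus the hypothesis of Theorem \ref{definablesubgroups} is met, and under the assumption that $(\mathcal{R}, D) = (\overline{\mathbb{R}}, x^r, D)$ does not define $\mathbb{Z}$, that theorem yields at once that $(\mathbb{R},+)$ has no non-trivial proper definable subgroup. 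This settles the additive case.

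For the multiplicative case I would argue by contradiction. Suppose $G$ is a non-trivial proper subgroup of $(\mathbb{R}_{>0}, \cdot)$ definable in $(\overline{\mathbb{R}}, x^r, D)$. By Theorem \ref{definablesubgroups} there is $q \in \mathbb{Q}$ with $G = (a^q)^{\mathbb{Z}}$; write $b := a^q$, so that $b > 0$, and $b \neq 1$ because $G$ is non-trivial. The key move is to feed $G$ through the definable power map: since $x \mapsto x^r$ is definable and $(b^k)^r = (b^r)^k$, the image of $G$ under $x^r$ is the cyclic group $(b^r)^{\mathbb{Z}}$, which is therefore also definable in $(\overline{\mathbb{R}}, x^r, D)$.

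Now both $b^{\mathbb{Z}}$ and $(b^r)^{\mathbb{Z}}$ are definable in $(\overline{\mathbb{R}}, x^r, D)$, and $\log_b(b^r) = r \notin \mathbb{Q}$ since $r$ is irrational and $b \neq 1$. Theorem \ref{twogroups} then shows that $(\overline{\mathbb{R}}, b^{\mathbb{Z}}, (b^r)^{\mathbb{Z}})$ defines $\mathbb{Z}$; as this structure is a reduct of $(\overline{\mathbb{R}}, x^r, D)$, the latter defines $\mathbb{Z}$, contradicting the standing assumption. Hence no such $G$ exists, and combined with the additive conclusion this proves the corollary.

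I expect the only delicate points to be the verification that $\mathcal{R}$ is o-minimal, which is what licenses the use of Theorem \ref{definablesubgroups} in the first place, and the small bookkeeping that $b$ is a positive real different from $1$ so that $\log_b(b^r)$ is defined and equals $r$. The heart of the argument, namely applying $x^r$ to manufacture a second cyclic group whose logarithm-ratio is the irrational number $r$, is short once Theorem \ref{twogroups} is available.
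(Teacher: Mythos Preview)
Your argument is correct. The paper states this corollary without proof, so there is no ``paper's proof'' to compare against; what you have written is a complete and sound derivation. The verification that $(\overline{\mathbb{R}},x^r)$ is o-minimal (via Wilkie and passage to reducts) is exactly what is needed to invoke Theorem~\ref{definablesubgroups}, and the additive case then drops out immediately. For the multiplicative case your idea of pushing $b^{\mathbb{Z}}$ through the definable map $x\mapsto x^r$ to produce a second cyclic group $(b^r)^{\mathbb{Z}}$ with $\log_b(b^r)=r\notin\mathbb{Q}$, and then appealing to Theorem~\ref{twogroups}, is clean and correct; the bookkeeping that $b>0$ and $b\neq 1$ is handled properly.

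One remark on economy: you can finish the multiplicative case without invoking Theorem~\ref{twogroups} at all, staying entirely inside Theorem~\ref{definablesubgroups}. Once $(b^r)^{\mathbb{Z}}$ is seen to be a definable non-trivial proper subgroup of $(\mathbb{R}_{>0},\cdot)$, Theorem~\ref{definablesubgroups} forces $(b^r)^{\mathbb{Z}}=(a^{q'})^{\mathbb{Z}}$ for some $q'\in\mathbb{Q}$, with the \emph{same} $a$ as before. Since $b=a^q$ with $q\neq 0$, comparing generators of this infinite cyclic group gives $a^{qr}=a^{\pm q'}$, hence $r=\pm q'/q\in\mathbb{Q}$, a contradiction. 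This is marginally shorter, but your route has the merit of tying the corollary directly to the paper's headline application.
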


\subsection*{Cyclic subgroups of the complex numbers}
Theorem \ref{mainthm} allows us to prove the following characterization for all cyclic multiplicative subgroups of complex numbers. A structure $\mathcal{R}$ is called \emph{d-minimal}, if for every $M\equiv \mathcal{R}$, every definable subset of $M$ is a disjoint union of open intervals and finitely many discrete sets.

\begin{thm} Let $S$ be an infinite cyclic subgroup of $(\mathbb{C}^{\times},\cdot)$. Then exactly one of the following holds:
\begin{itemize}
\item $(\overline{\mathbb{R}},S)$ defines $\mathbb{Z}$,
\item $(\overline{\mathbb{R}},S)$ is $d$-minimal,
\item every open definable set in $(\overline{\mathbb{R}},S)$ is semialgebraic.
\end{itemize}
\end{thm}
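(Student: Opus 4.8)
The plan is to write $S=\alpha^{\mathbb{Z}}$ for some $\alpha\in\mathbb{C}^{\times}$ that is not a root of unity (since $S$ is infinite), put $\alpha=re^{i\theta}$ with $r=|\alpha|>0$, and—replacing $\alpha$ by $\alpha^{-1}$, which generates the same group—assume $r\geq 1$. Viewing $\mathbb{C}=\mathbb{R}^{2}$, I would split into three cases according to the position of $\alpha$, each yielding exactly one of the three alternatives: (A) $r>1$ and $\theta/2\pi\notin\mathbb{Q}$, a genuine logarithmic spiral, giving that $(\overline{\mathbb{R}},S)$ defines $\mathbb{Z}$; (B) $r>1$ and $\theta/2\pi\in\mathbb{Q}$, where the points lie on finitely many rays, giving d-minimality; and (C) $r=1$ and $\theta/2\pi\notin\mathbb{Q}$, where $S$ is dense in the unit circle, giving that every open definable set is semialgebraic. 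The remaining possibility $r=1,\ \theta/2\pi\in\mathbb{Q}$ makes $\alpha$ a root of unity and $S$ finite, so it does not occur.

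For case (A), the idea is to manufacture the hypotheses of Theorem \ref{mainthm}. Since $r>1$, the map $k\mapsto|\alpha^{k}|=r^{k}$ is injective, so $D:=\{\,|s|:s\in S,\ |s|\geq 1\,\}=r^{\mathbb{N}}$ is a closed discrete subset of $\mathbb{R}$ definable in $(\overline{\mathbb{R}},S)$, and each $d=r^{k}\in D$ is the modulus of a unique $s_{d}=\alpha^{k}\in S$. I would then let $f:D\to\mathbb{R}$ send $d$ to the first coordinate of $s_{d}/|s_{d}|$, that is $f(r^{k})=\cos(k\theta)$; this is the restriction to $D$ of a semialgebraic function of $S$, hence definable in $(\overline{\mathbb{R}},S)$. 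Because $\theta/2\pi$ is irrational, Weyl equidistribution gives that $\{k\theta\bmod 2\pi:k\in\mathbb{N}\}$ is dense in $[0,2\pi)$, so $f(D)=\{\cos(k\theta):k\in\mathbb{N}\}$ is dense in $[-1,1]$, in particular somewhere dense. Theorem \ref{mainthm} then yields that $(\overline{\mathbb{R}},f)$ defines $\mathbb{Z}$, and since the graph of $f$ is definable in $(\overline{\mathbb{R}},S)$, so does $(\overline{\mathbb{R}},S)$.

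For cases (B) and (C) I would invoke existing tameness results. In case (B), writing $\zeta=e^{i\theta}$, a root of unity of some order $N$, one has $\alpha^{N}=r^{N}$ and hence $\alpha^{qN+j}=\zeta^{j}\cdot\bigl(r^{j}(r^{N})^{q}\bigr)$, so $S$ is the union of the finitely many rotated cosets $\zeta^{j}\cdot\bigl(r^{j}(r^{N})^{\mathbb{Z}}\bigr)$, each obtained from the real group $(r^{N})^{\mathbb{Z}}$ by multiplication by a fixed complex constant; conversely $r^{\mathbb{Z}}=\{\,|s|:s\in S\,\}$ is definable in $(\overline{\mathbb{R}},S)$. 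Thus $(\overline{\mathbb{R}},S)$ is interdefinable, up to finitely many constants, with $(\overline{\mathbb{R}},r^{\mathbb{Z}})$, which is d-minimal by van den Dries \cite{discrete}; as d-minimality is preserved under naming constants, $(\overline{\mathbb{R}},S)$ is d-minimal. In case (C), $S$ is a dense subgroup of the unit circle, and the assertion that every open definable set of $(\overline{\mathbb{R}},S)$ is semialgebraic is precisely the open-core result for the real field expanded by a dense subgroup of the circle (Belegradek--Zilber).

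Finally I would check that the three alternatives are genuinely exclusive for the structures at hand, which upgrades ``one holds'' to ``exactly one''. If $(\overline{\mathbb{R}},S)$ defines $\mathbb{Z}$ then it defines $\mathbb{Q}$ and the open set $\bigcup_{n\in\mathbb{Z}}(n,n+\tfrac12)$, ruling out both d-minimality and semialgebraicity of the open core; in case (B) the infinite discrete definable set $r^{\mathbb{Z}}$ produces the open definable set $\bigcup_{a\in r^{\mathbb{Z}}}(a,\tfrac32 a)$ with infinitely many components, which is not semialgebraic, so the third alternative fails there; and in case (C) the set $S$ is dense and co-dense in the circle, hence not a finite union of intervals and discrete sets, so d-minimality fails. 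I expect the main obstacle to be case (C): the open-core statement for a dense subgroup of the circle is a substantial result that must be imported wholesale, whereas the genuinely new input is case (A), where Theorem \ref{mainthm} does all the work.
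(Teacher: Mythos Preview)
Your proposal is correct and follows essentially the same three-case split as the paper: the spiral case is handled by applying Theorem \ref{mainthm} to a definable map from the moduli $r^{k}$ to a trigonometric value of $k\theta$ (the paper uses $\sin$ where you use $\cos$), while the other two cases import known tameness results and are shown to be mutually exclusive by the same elementary observations you give. The only notable difference is in citations: the paper attributes d-minimality in case (B) to \cite{tame} rather than \cite{discrete}, and for the semialgebraic open core in case (C) it cites G\"unayd\i n--Hieronymi \cite{ayhanme2} rather than Belegradek--Zilber \cite{unit}.
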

\noindent \begin{proof} Let $S:=(ae^{i\varphi})^{\mathbb{Z}}\subseteq \mathbb{R}^2$, where $a\in \mathbb{R}_{>0}$ and $\varphi \in \mathbb{R}$. If $a=1$, $S$ is a finitely generated subgroup of the unit circle. Hence every open definable set in $(\overline{\mathbb{R}},S)$ is semialgebraic by \cite{ayhanme2}. Further $(\overline{\mathbb{R}},S)$ is not d-minimal, since it defines a dense and codense set.\\
\noindent If $\varphi \in 2 \pi \mathbb{Q}$, the structure $(\overline{\mathbb{R}},S)$ is interdefinable with $(\overline{\mathbb{R}},a^{\mathbb{Z}})$, and so does not define $\mathbb{Z}$. It defines an infinite discrete set and is d-minimal by \cite{tame}. Further the complement of $S$ in $\mathbb{R}^2$ is open and not semialgebraic.\\
\noindent Finally let $a\neq 1$ and $\varphi \notin 2 \pi \mathbb{Q}$. Then the function
\begin{equation*}
(a_1,a_2) \mapsto \sqrt{a_1^2 + a_2^2}
\end{equation*}
is injective on $S$ and maps $(ae^{i\varphi})^k$ to $a^k$ for every $k \in \mathbb{Z}$. Further the function
\begin{equation*}
(a_1,a_2) \mapsto \frac{a_2}{\sqrt{a_1^2 + a_2^2}}
\end{equation*}
maps $(ae^{i\varphi})^k$ to $\sin(k\varphi)$ for every $k\in \mathbb{Z}$. Hence the map $f: a^{\mathbb{Z}} \to (0,1)$
\begin{equation*}
a^k \mapsto \sin(k\varphi)
\end{equation*}
is definable in $(\overline{\mathbb{R}},S)$. Since $\varphi \notin 2\pi \mathbb{Q}$, the image of $f$ is dense in $(0,1)$. Hence $(\overline{\mathbb{R}},S)$ defines $\mathbb{Z}$ by Theorem \ref{mainthm}.
\end{proof}

\noindent Expansions of the real field by finite rank multiplicative subgroups of the unit circle have been studied by Belegradek and Zilber in \cite{unit} and Günayd\i n in \cite{ayhan}.

\subsection*{A generalization of Miller's AEG} \noindent The proof of Theorem \ref{mainthm} is based on Miller's Lemma on Asymptotic Extraction of Groups from \cite{proj}:

\begin{lem} \label{millerlemma} An expansion of $\overline{\mathbb{R}}$ defines $\mathbb{Z}$ iff it defines the range of a sequence $(a_k)_{k \in \mathbb{N}}$ of real numbers such that $\lim_{k \to \infty} (a_{k+1} -a_k) \in \mathbb{R}-\{0\}$.
\end{lem}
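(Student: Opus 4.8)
The forward direction is immediate: if the structure defines $\mathbb{Z}$, then it defines the range of the sequence $a_k=k$, and $a_{k+1}-a_k=1\to 1\neq 0$. So the content is the converse, and the plan is to extract from the given range a genuine infinite cyclic subgroup of $(\mathbb{R},+)$, which is interdefinable with $\mathbb{Z}$ via the field operations. Let $A:=\{a_k:k\in\mathbb{N}\}$ be the definable range and $L:=\lim_k(a_{k+1}-a_k)\neq 0$. First I would record that everything needed is definable from $A$ alone. Since the gaps tend to $L$, after possibly reversing the order (to arrange $L>0$) the tail of $A$ is strictly increasing, unbounded above, with gaps eventually in $(L/2,3L/2)$; in particular $A$ is closed and discrete. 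The successor function $s(x)=\min\{a\in A:a>x\}$ is then definable on the tail, and $L=\lim_{x\to+\infty}(s(x)-x)$ is a definable point. The one clean fact I would isolate is that for every fixed integer $m$,
\[
\lim_{k\to\infty}\bigl(a_{k+m}-a_k\bigr)=mL,
\]
since the $m$-fold gap is a sum of $m$ consecutive gaps, each tending to $L$.

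The heart of the matter is guessing the right definable set. I would consider the set $G$ of bounded limit points at infinity of the difference set of $A$, namely
\[
G:=\{\,t\in\mathbb{R}:\forall\eps>0\ \forall M\ \exists a,b\in A\ (\,a>M\wedge|b-a-t|<\eps\,)\,\}.
\]
This is visibly first-order definable in $(\overline{\mathbb{R}},A)$. The claim to be proved is $G=L\mathbb{Z}$. Granting it, $L$ is the least positive element of $G$, hence definable, and $\mathbb{Z}=L^{-1}G$ is definable, which finishes the proof.

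It remains to verify $G=L\mathbb{Z}$, and this is where the work lies. The inclusion $L\mathbb{Z}\subseteq G$ is the easy half: taking $a=a_k$ and $b=a_{k\pm m}$ and letting $k\to\infty$, the displayed limit gives $\pm mL\in G$. The hard part will be the reverse inclusion $G\subseteq L\mathbb{Z}$, i.e.\ ruling out extraneous limit points. Here I would use discreteness quantitatively: if $t\in G$ is witnessed by $a_{k_i}\to+\infty$ with $a_{l_i}-a_{k_i}\to t$, then because the tail gaps exceed $L/2$ the index differences satisfy $|l_i-k_i|\le 2(|t|+1)/L$ eventually and are therefore bounded; passing to a subsequence on which $l_i-k_i$ is a constant integer $m$ and invoking the displayed limit yields $t=mL\in L\mathbb{Z}$.

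The remaining obstacles I anticipate are essentially bookkeeping. One must ensure that the possibly irregular initial segment of $(a_k)$ never interferes; this is handled automatically because the definition of $G$ only ever quantifies over $a>M$ for arbitrarily large $M$, so only the well-behaved tail is ever seen. One must also confirm that the order reversal used to arrange $L>0$, the definability of $L$ as a single point, and the final rescaling by $L^{-1}$ are all legitimate inside the structure, which they are since each is built from $A$ by field operations and bounded quantification over $\mathbb{R}$. The genuinely nontrivial idea is the choice of $G$ together with the quantitative discreteness estimate forcing bounded index gaps; everything else is routine verification.
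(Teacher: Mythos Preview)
The paper does not actually prove this lemma; it is stated with attribution to Miller and cited from \cite{proj} (``Miller's Lemma on Asymptotic Extraction of Groups''). So there is no in-paper proof to compare against.

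Your argument is correct and is essentially the standard asymptotic-extraction idea behind Miller's result: from the definable range $A$ one forms the set $G$ of accumulation points at infinity of $A-A$ and shows $G=L\mathbb{Z}$, whence $\mathbb{Z}=L^{-1}G$ is definable. The two halves of $G=L\mathbb{Z}$ are handled exactly as one would expect: the inclusion $L\mathbb{Z}\subseteq G$ from $a_{k+m}-a_k\to mL$, and the reverse inclusion by using the eventual lower bound $L/2$ on consecutive gaps to force bounded index differences, then passing to a constant-difference subsequence. Your bookkeeping remarks (reducing to $L>0$, definability of $L$ via the successor map on $A$, and the fact that the finitely many possibly irregular initial terms are invisible in the definition of $G$ because of the quantifier $\forall M$) are all in order. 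One small point worth making explicit in a final write-up: when you write $a^{(i)}=a_{k_i}$ and $b^{(i)}=a_{l_i}$, you should note that for large $i$ both elements lie in the strictly increasing tail of the sequence, so the indices $k_i,l_i\ge N$ are uniquely determined there even if the original sequence happened to repeat values in its initial segment; this is what makes the gap estimate $|a_{l_i}-a_{k_i}|\ge |l_i-k_i|\cdot L/2$ legitimate.
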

\noindent Theorem \ref{dichotomy} also allows us to prove the following generalization of Lemma \ref{millerlemma}.

\begin{thm}\label{genmiller} An expansion of $\overline{\mathbb{R}}$ defines $\mathbb{Z}$ iff it defines the range $S$ of a strictly increasing sequence $(a_k)_{k \in \mathbb{N}}$ of positive real numbers such that $S$ is closed and discrete and  $\lim_{k\to \infty} \frac{a_{k+1}}{a_k} =1$.
\end{thm}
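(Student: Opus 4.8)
The statement is a biconditional, and I would handle the two directions quite asymmetrically. The left-to-right implication is immediate: if an expansion defines $\mathbb{Z}$, then it defines $\mathbb{N}$ and hence the sequence $a_k = k+1$, whose range $\{2,3,4,\dots\}$ is closed, discrete, strictly increasing, consists of positive reals, and satisfies $\lim_{k\to\infty}\frac{a_{k+1}}{a_k}=\lim_{k\to\infty}\frac{k+2}{k+1}=1$. So the plan concentrates on the converse.

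For the converse, suppose the expansion defines such an $S$. First I would record that $(a_k)$ diverges to $+\infty$: a strictly increasing sequence of positive reals converges in $(0,\infty]$, and a finite limit would be an accumulation point of $S$, contradicting that $S$ is closed and discrete. The heart of the argument is then the density claim that the quotient set
\begin{equation*}
Q := \{a_j/a_k : j,k \in \mathbb{N}\}
\end{equation*}
is dense in $(1,\infty)$, hence somewhere dense in $\mathbb{R}$. Granting this, the proof concludes quickly. Since $S$ is definable, closed and discrete, and $Q$ is definable in $(\overline{\mathbb{R}},S)$ (division being available in $\overline{\mathbb{R}}$), I would apply Theorem \ref{dichotomy} with $\mathcal{R}=\overline{\mathbb{R}}$ and $D=S$. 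The set $Q$ is countable, so it has empty interior, yet it is not nowhere dense; thus it escapes the tame alternative, forcing $(\overline{\mathbb{R}},S)$ — and a fortiori the ambient expansion — to define $\mathbb{Z}$. (Equivalently, one may read the conclusion straight off Theorem \ref{mainthm} applied to the definable quotient map $f:S^2\to\mathbb{R}$, $(x,y)\mapsto y/x$, whose image is $Q$.)

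It remains to establish the density claim, which I expect to be the only real obstacle. Writing $r_i = a_{i+1}/a_i > 1$, the hypothesis is precisely $r_i \to 1$. Fixing a target $t>1$ and a tolerance $\eps>0$, I would choose $N$ so large that $r_i < 1+\delta$ for all $i\geq N$, where $\delta>0$ is small enough that $t\delta<\eps$. Since $a_j/a_N = \prod_{i=N}^{j-1} r_i \to \infty$ as $j\to\infty$ while equalling $1$ at $j=N$, there is a least index $j>N$ with $a_j/a_N \geq t$; for this $j$ one has $a_{j-1}/a_N < t$ and $j-1\geq N$, whence
\begin{equation*}
t \leq \frac{a_j}{a_N} = \frac{a_{j-1}}{a_N}\cdot r_{j-1} < t(1+\delta) < t+\eps .
\end{equation*}
As $t$ and $\eps$ were arbitrary, this shows $Q$ is dense in $(1,\infty)$. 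Intuitively, $r_i\to 1$ forces the multiplicative gaps of $S$ to shrink to $0$, so products of consecutive ratios sweep out $(1,\infty)$ with arbitrarily fine resolution; the control on the overshoot in the last displayed inequality is exactly where the hypothesis $\lim_{k\to\infty}\frac{a_{k+1}}{a_k}=1$ is used in full, paralleling the role of the dense quotient set in the proof of Theorem \ref{twogroups}.
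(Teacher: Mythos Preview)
Your proof is correct and follows essentially the same route as the paper: form the quotient set $Q$ of $S$, observe that the ratio hypothesis forces $Q$ to be dense in an interval, and then invoke Theorem~\ref{dichotomy} (equivalently Theorem~\ref{mainthm}) to conclude. The paper's proof is three sentences long and simply asserts that $Q$ is dense in $\mathbb{R}_{>0}$ ``since $\lim_{k\to\infty}\frac{a_{k+1}}{a_k}=1$''; you supply the overshoot argument via consecutive ratios that the paper leaves implicit, and you also make the forward direction explicit, but there is no substantive difference in strategy.
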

\noindent \begin{proof} Let $Q$ be the set of quotients of $S$. Since $\lim_{k\to \infty} \frac{a_{k+1}}{a_k}=1$, it is easy to see that $Q$ is dense in $\mathbb{R}_{>0}$. Then Theorem \ref{dichotomy} implies that $\mathbb{Z}$ is definable.
\end{proof}

\subsection*{Acknowledgement} This research was funded by the \emph{Deutscher Akademischer Austausch Dienst}. The author thanks \emph{The Fields Institute} for hospitality and Chris Miller for help in preparing this paper.

\end{section}

\begin{section}{Proof of Theorem \ref{mainthm}}

\begin{lem}\label{onedim} Let $D\subseteq \mathbb{R}$ be  closed and discrete, and let $n\in \mathbb{N}$. Then there are $E \subseteq \mathbb{R}$ and a bijection $f : D^n \to E$ such that $f$ is definable in $(\overline{\mathbb{R}},D)$, $E$ is closed and discrete, and $|a -b| \geq 1$ for all distinct $a,b \in E$.
\end{lem}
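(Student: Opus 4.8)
The plan is to build $E$ and $f$ by separating the two features demanded of $E$: that it live in one dimension and that it be $1$-separated.

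First I would record the available tools and the reductions. As $D$ is closed and discrete it has no accumulation point in $\R$, so $D\cap[a,b]$ is finite for all reals $a<b$; hence, after applying $x\mapsto -x$ to the negative part, both $D\cap\R_{<0}$ and $D\cap\R_{\ge 0}$ are finite or of order type $\omega$. It therefore suffices to treat one such tail, i.e.\ to assume $D\subseteq\R_{>0}$ is of order type $\omega$; the two tails are afterwards placed in disjoint $1$-separated regions of $\R_{>0}$, and finite cases are trivial. The successor $\sigma(d):=\min\{d'\in D:d'>d\}$ and the gap $d\mapsto\sigma(d)-d$ are definable in $(\overline{\R},D)$, and these are essentially the only legitimate handles: a function counting the elements of $D$ below a point has range $\N$ and would already define $\Z$, so the entire construction must be carried out without ever counting.

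The technical heart is the $1$-separation. I would aim for a definable strictly increasing $\Phi\colon D\to\R$ with closed, discrete, $1$-separated image. The naive recipe, adding to each $d$ the cumulative deficit $\sum_{d'\in D,\,d'<d}\max(0,1-(\sigma(d')-d'))$, yields gaps exactly $\ge 1$ but is an unbounded finite sum and hence not first-order definable --- it is a counting function in disguise. The obstacle is thus to reproduce this spreading by a closed-form, purely local definition built from $\sigma$ and the field operations. My approach would be to split $D$ definably into the part where $\sigma(d)-d\ge 1$, left essentially fixed, and the part where the gap is small, on which I compose with a semialgebraic map that magnifies small gaps. The genuinely hard point, and the one I expect to absorb most of the work, is keeping the image closed and discrete: magnification of small gaps tends to drag far-apart points together and so threatens to create accumulation points or even a dense image, and the multiplier must be tuned to the scale of $d$ precisely to avoid this. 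Proving that no accumulation point survives is the crux of the lemma.

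Finally, for $n\ge 2$ I would reduce dimension by induction, the step amounting to a definable pairing of two $1$-separated closed discrete sets $E',E''$ into a single such set. Here the same tension reappears in sharp form: a pairing that is injective on the two-dimensional grid $E'\times E''$ pushes the image toward density, while any attempt to organize the grid into finite shells secretly counts. I would therefore encode $(u,v)\in E'\times E''$ by a value whose integer-free ``coarse part'' is governed by the larger of $u,v$ and whose ``fine part'' records the smaller, choosing the separation between successive coarse values to grow with the scale so that injectivity and the absence of accumulation points hold simultaneously; this is again verified using the $1$-separation already secured in the previous step. Composing the dimension reduction with the base case and undoing the tail-splitting of the first paragraph produces the required bijection $f\colon D^n\to E$ onto a closed, discrete, $1$-separated $E\subseteq\R$, definable in $(\overline{\R},D)$.
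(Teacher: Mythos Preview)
Your proposal correctly isolates the two obstacles---achieving $1$-separation definably, and collapsing $D^n$ to one dimension definably---but it does not actually resolve either one. You label the $1$-separation step ``the genuinely hard point'' and the pairing step the place where ``the same tension reappears in sharp form,'' and in both places you stop precisely where a concrete construction is required. Splitting $D$ into large-gap and small-gap parts and applying an unspecified ``semialgebraic map that magnifies small gaps'' to the latter is not a proof: the two parts interleave, and you give no mechanism that keeps the combined image injective, order-preserving, closed, and discrete. The coarse/fine pairing for $n\ge 2$ is likewise only gestured at. As written, the proposal is a correct diagnosis of the difficulties rather than a solution to them.

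The paper's argument is much shorter than your plan anticipates, because each obstacle dissolves with a single trick. For the dimension reduction there is no induction and no pairing: since $D$ is countable, choose $a_1,\dots,a_n\in\R_{>0}$ linearly independent over the field $\Q(D)$; then the single linear form $g(d_1,\dots,d_n)=a_1d_1+\cdots+a_nd_n$ is injective on $D^n$ with closed discrete image in $\R_{>0}$. For the $1$-separation, your observation that the cumulative-deficit \emph{sum} is not first-order definable is exactly right, but the cure is to replace the sum by a \emph{maximum}: on the well-ordered set $g(D^n)$ with successor $\sigma$, put
\[
h(x)=x\cdot\max\Bigl(\{(\sigma(y)-y)^{-1}:y\in g(D^n),\ y<x\}\cup\{1\}\Bigr).
\]
The maximum is over a finite definable set, hence definable; the multiplier is nondecreasing in $x$, so $h$ is strictly increasing; and for consecutive $x_1<x_2=\sigma(x_1)$ the multiplier at $x_2$ is at least $(x_2-x_1)^{-1}$, forcing $h(x_2)-h(x_1)\ge 1$. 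This overshoots the exact deficit, but overshooting is harmless and is precisely what makes the formula first-order. The local gap-magnification and shell-encoding devices you were reaching for are not needed.
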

\noindent \begin{proof} We can reduce to the case that $D$ is infinite, $D\subseteq \mathbb{R}_{>0}$ and $n\geq 1$. Since $D$ is countable, there are $a_1,...,a_n \in \mathbb{R}_{>0}$ which are linearly independent over the field $\mathbb{Q}(D)$. Define $g : D^n \to \mathbb{R}_{>0}$ by
\begin{equation*}
(d_1,...,d_n) \mapsto a_1d_1 + ... + a_n d_n.
\end{equation*}
Since $a_1,...,a_n$ are linearly independent over the field $\mathbb{Q}(D)$, the map $g$ is injective. Further for every positive real number $b$ there are only finitely many elements in $g(D^n)$ which are smaller than $b$. Hence $g(D^n)$ is discrete and closed.\\
Let $\sigma :g(D^n) \to g(D^n)$ be the successor function on the well-ordered set $(g(D^n),<)$. Further let $h: g(D^n) \to \mathbb{R}_{>0}$ be the function defined by
\begin{equation*}
x \mapsto x \cdot \max \left( \{ (\sigma(y)-y)^{-1} : y \in g(D^n), y< x\} \cup \{1\}\right).
\end{equation*}
It is easy to see that $h$ is strictly increasing and definable. Hence $h$ is injective. By construction the distance between two elements of $h(g(D^n))$ is at least $1$. So set $E:=h(g(D^n))$ and $f:= h\circ g$. \end{proof}

\noindent We fix the following notation. For two sets $A,B$, we write $A-B$ for the relative complement of $B$ in $A$. For a Lebesgue measurable set $S \subseteq \mathbb{R}$, we will write $P\left[S\right]$ for its Lebesgue measure.

\noindent \begin{proof}[Proof of Theorem \ref{mainthm}] Let $D \subseteq \mathbb{R}
$ be closed and discrete and let $f: D^n \to \mathbb{R}$ be such that $f(D^n)$ is somewhere dense. By Lemma \ref{onedim}, we can assume that $D$ is a subset of $\mathbb{R}_{\geq 1}$, $n$ equals 1 and $|d-e|\geq1$ for all distinct $d,e \in D$. After composing $f$ with an affine function and shrinking $D$ we can assume that $f(D)$ is contained in the interval $(1,2)$ and dense in it.\\
First we will construct a sequence $(d_N)_{N=1}^{\infty}$ of elements of $D$ with the following properties: for all $M,N\in\mathbb{N}_{\geq 1}$
\begin{itemize}
\item[(i)] if $M<N$, then
\begin{align*}
f(d_M)(1+\frac{d_M^{-2}}{M+\frac{1}{M}}) &<f(d_N)(1+\frac{d_N^{-2}}{N+\frac{1}{N}})\textrm{ and }\\
f(d_N)(1+\frac{d_N^{-2}}{N})&<f(d_M)(1+\frac{d_M^{-2}}{M})<2,
\end{align*}
\item[(ii)] if $d \in D$, $N>1$ and $d_1\leq d_{N-1}^7<d<d_N$, then
 \begin{equation*}
 f(d)(1+d^{-2}) < f(d_N) \textrm{ or } f(d)>f(d_N)(1+d_N^{-2}).
 \end{equation*}
\item[(iii)] $d_1 > 4$ and $d_N > \max \{4,2\cdot N,d_{N-1}^{49}\}$ for $N>1$.
\end{itemize}
We now show that such a sequence exists by induction on $N$. For $N=1$, take a $d_1 \in D$ such that $d_1>4$ and $
 f(d_1)(1+d_1^{-2}) < 2$. So conditions (i) and (iii) hold for such a $d_1$. Further condition (ii) will be satisfied trivially. Suppose we have already defined a sequence $d_1,...,d_N$ with the properties (i)-(iii). By (iii) for $d_N$ and the fact that the distance between two distinct elements of $D$ is at least $1$, it follows that
\begin{align*}
P\left[\bigcup_{d\in D, d \geq d_N^7} \left[f(d),f(d)(1+d^{-2})\right]\right]
&< \sum_{d \in D, d\geq d_N^{7}} 2  \cdot d^{-2}\\
& \leq 2 \cdot d_N^{-6}\\
&< \frac{1}{N^3+N} \cdot d_N^{-2}\\
&\leq P\left[\left(f(d_N)(1+\frac{d_N^{-2}}{N+\frac{1}{N}}),f(d_N)(1+\frac{d_N^{-2}}{N})\right)\right].
\end{align*}
In the following, let $S$ be the set
\begin{equation*}
\left(f(d_N)(1+\frac{d_N^{-2}}{N+\frac{1}{N}}),f(d_N)(1+\frac{d_N^{-2}}{N})\right) -  \bigcup_{d\in D, d \geq d_N^7} \left[f(d),f(d)(1+d^{-2})\right].
\end{equation*}
By the above calculation, the set $S$ has positive Lebesgue measure. By a result of Steinhaus from \cite{steinhaus}, we can then find elements in $S$ arbitrarily close together. Hence we can take $x_1,x_2 \in S$ with $x_2 > x_1$ so close together that the smallest $d \in D$ with
$x_1 < f(d) < x_2$ satisfies
\begin{equation*}
d > \max \{2\cdot(N+1),d_N^{49}\}.
\end{equation*}
Let $d_{N+1}$ be this smallest element of $D$ with $x_1 < f(d_{N+1})< x_2$. We now show that $d_{N+1}$ satisfies (i)-(iii). Condition (iii) directly follows from the definition of $d_{N+1}$. For (i), note that $d_{N+1}>d_N^7$ and $x_1,x_2 \in S$. Thus
\begin{align}\label{thmineq}
f(d_N)(1+\frac{d_N^{-2}}{N+\frac{1}{N}})&<x_1<f(d_{N+1}) \textrm{ and }\\
f(d_{N+1})(1+d_{N+1}^{-2})&<x_2<f(d_N)(1+\frac{d_N^{-2}}{N}). \label{thmineq2}
\end{align}
So $d_{N+1}$ satisfies (i) as well. For (ii), let $d \in D$ with $d_{N}^7 \leq d < d_{N+1}$. By minimality of $d_{N+1}$, we have that either $f(d)\leq x_1$ or $f(d)\geq x_2$. By inequality \eqref{thmineq2} we just need to consider the case that $f(d)\leq x_1$.  Since $d\geq d_N^7$ and $x_1 \in S$, we get that $f(d)(1+d^{-2})<x_1$. Hence $f(d)(1+d^{-2})<f(d_{N+1})$ by inequality \eqref{thmineq}. Thus $d_{N+1}$ satisfies (ii).\\
Let $c:= \lim_{N\to \infty}f(d_N)(1+\frac{d_N^{-2}}{N})$.
Let $\nu: D - \{ c \} \to \mathbb{R}$ be the function defined by
\begin{equation*}
\nu(x):=\frac{x^{-2}f(x)}{c-f(x)}.
\end{equation*}
Let $\varphi(x)$ be the formula
\begin{equation*}
\forall u \in D  ( f(u) < c < f(u)\cdot (1+u^{-2}))\rightarrow ( u < x^{\frac{1}{7}} \vee u > x ),
\end{equation*}
and define a subset $A$ of $D$ by
\begin{equation*}
A := \{ d \in D \ : \ f(d) < c < f(d)\cdot (1+d^{-2})\wedge d \geq d_2 \wedge \varphi(d)\}.
\end{equation*}
We now show that the following two statements hold:
\begin{itemize}
\item[(1)] for every $N \in \mathbb{N}_{>0}$, $\nu(d_N) \in (N,N+\frac{1}{N})$, and
\item[(2)] $A = \{ d_N \ : \ N \in \mathbb{N}_{>1}\}$.
\end{itemize}
We first consider (1). By (i),
\begin{equation}\label{nuineq}
f(d_N)(1+\frac{d_N^{-2}}{N+\frac{1}{N}})<c<f(d_N)(1+\frac{d_N^{-2}}{N}).
\end{equation}
After easy rearrangements, one sees that \eqref{nuineq} is equivalent to the statement $\nu(d_N) \in (N,N+\frac{1}{N})$. So (1) holds.\\
For (2), let $d \in A$. For a contradiction, suppose there is $N \in \mathbb{N}_{>1}$ such that $d_{N-1} < d < d_{N}$.  By \eqref{nuineq}, we have
\begin{equation*}
f(d_{N-1}) < c < f(d_{N-1})(1+d_{N-1}^{-2}).
\end{equation*}
Since $\varphi(d)$ holds, $d_{N-1}<d^{\frac{1}{7}}$. Hence $d_{N-1}^7<d<d_{N}$.
Thus by (ii),
\begin{equation*}
f(d)(1+d^{-2}) < f(d_{N})\textrm{ or }f(d)>f(d_N)(1+d_N^{-2}).
\end{equation*}
By the definition of $c$ and (1), we get
\begin{equation*}
f(d)(1+d^{-2})< c\textrm{ or } f(d) > c .
\end{equation*}
So the inequality $f(d)< c < f(d)(1+d^{-2})$ does \emph{not} hold. This is a contradiction to the statement $d \in A$.\\
It is only left to show that for every $N \in \mathbb{N}_{>1}$, we have $d_N \in A$. By \eqref{nuineq}, it only remains to establish $\varphi(d_N)$. Therefore let $d \in D$ with $d_N^{\frac{1}{7}}<d < d_N$. Since $d_{N-1}^{49}<d_N$, we have that  $d_{N-1}^7< d$. By the above, we get that $f(d)< c < f(d)(1+d^{-2})$ does \emph{not} hold. Hence $\varphi(d_N)$. Thus (2) holds.\\
Now it is easy to see that $(\overline{\mathbb{R}},f)$ defines $\mathbb{Z}$. By (2), the set $\{d_N: N \in \mathbb{N}_{>1}\}$ is definable and so is its image under $\nu$. By (1), we have that $\lim_{N\to \infty} (N - \nu(d_N))=0$. Thus by Lemma \ref{millerlemma},
$(\overline{\mathbb{R}},f)$ defines $\mathbb{Z}$.
\end{proof}

\end{section}

% ----------------------------------------------------------------


\begin{thebibliography}{134544443343}
\bibitem{unit} O. Belegradek, B. Zilber,
 The model theory of the field of reals with a subgroup of the unit circle, J. Lond. Math. Soc. (2)  78  (2008) 563-579
\bibitem{discrete} L.
van den Dries, The field of reals with a predicate for the powers of
two, Manuscripta Math. 54 (1985) 187-195
\bibitem{dense} L. van den Dries, A. Günayd\i n, The fields of
real and complex numbers with a small multiplicative group, Proc.
London Math. Soc. (3) 93 (2006) 43-81
\bibitem{sparse} H. Friedman, C. Miller,  Expansions of o-minimal structures by sparse sets, Fund. Math.,  (1) 167  (2001) 55-64
\bibitem{ayhan} A. Günayd\i n, Model Theory of Fields with Multiplicative Groups, PhD thesis, University of Illinois at Urbana-Champaign (2008)
\bibitem{ayhanme2} A. Günayd\i n, P. Hieronymi, The real field with the rational points of an elliptic curve, Preprint, arXiv:0906.0528 (2009)
\bibitem{proj} C. Miller, Avoiding the projective hierarchy in expansions of the real field by sequences,
Proc. of the American Mathematical Society, (5) 134 (2005) 1483-1493
\bibitem{tame} C. Miller, Tameness in expansions of the real field, in Logic Colloquium ’01 (Vienna), Lect.
Notes Log. 20, Assoc. Symbol. Logic, (2005) 281–316.
\bibitem{core1} C. Miller, P. Speissegger, Expansions of the
Real Line by Open Sets: O-minimality and Open Cores, Fund. Math.,
162 (1999) 193-208
\bibitem{trajector} C. Miller, P. Speissegger, A trichotomy
for expansions of $\mathbb{R}_{an}$ by trajectories of analytic
planar vector fields, Preliminary report, available at
http://www.math.ohio-state.edu/$\sim$miller/trichot.pdf
\bibitem{steinhaus} H. Steinhaus, Sur les distances des points de mesure positive, Fund. Math., 1 (1920)
93–104
\bibitem{michael} M. Tychonievich, Defining additive subgroups of the reals from convex subsets, Proc. Amer. Math. Soc. 137  (2009) 3473-3476
 \end{thebibliography}
\end{document}